\documentclass[12pt]{article}

\usepackage{amssymb}
\usepackage{amsmath}
\usepackage{amsbsy}
\usepackage{amscd}
\usepackage{amsfonts}
\usepackage{amsthm}
\usepackage{mathrsfs}
\usepackage{verbatim}
\usepackage[colorlinks]{hyperref}
\usepackage{fullpage}
\usepackage{mathdots}
\usepackage{graphicx,subfigure}
\usepackage[english]{babel}
\usepackage[utf8]{inputenc}
\usepackage{tikz}

\usepackage{authblk}

\usepackage{mathtext}

\usetikzlibrary{backgrounds,fit, matrix}
\usetikzlibrary{positioning}
\usetikzlibrary{calc,through,chains}
\usetikzlibrary{arrows,shapes,snakes,automata, petri}

\newtheorem{Theorem}[equation]{Theorem}
\newtheorem{Corollary}[equation]{Corollary}
\newtheorem{Lemma}[equation]{Lemma}

\theoremstyle{definition}

\newtheorem{Example}[equation]{Example}

\numberwithin{equation}{section}
\numberwithin{figure}{section}

\newcommand{\C}{{\mathbb C}}

\newcommand{\Q}{{\mathbb Q}}

\newcommand{\mc}[1]{\mathcal{#1}}

\newcommand{\mt}[1]{\text{#1}}

\begin{document}

\title{Adjoint Representations of the Symmetric Group}

\author[1]{Mahir Bilen Can}
\author[2]{Miles Jones}

\affil[1]{{mahirbilencan@gmail.com}}
\affil[2]{{mej016@ucsd.edu}}

\date{}
\maketitle

\begin{abstract}
We study the restriction to the symmetric group, $\mc{S}_n$ 
of the adjoint representation of $\mt{GL}_n(\C)$. 
We determine the irreducible constituents 
of the space of symmetric as well as the space of 
skew-symmetric $n\times n$
matrices as $\mc{S}_n$-modules.

\vspace{.35cm}
\noindent 
\textbf{Keywords:} Adjoint representation, 
partial transformations, loop-augmented labeled rooted forests.\\ 
\noindent 
\textbf{MSC:} 05E10, 20C30, 16W22
\end{abstract}

\vspace{.1cm}
\begin{center}
{\em To the memory of our beloved mentor and a good friend, Jeff Remmel.}
\end{center}
\vspace{.1cm}

\normalsize

\section{Introduction}

In~\cite{CanRemmel}, the first author and Jeff Remmel 
introduced the notion of ``loop-augmented rooted 
forest'' and explained its combinatorial representation 
theoretic role for the conjugation action of the 
symmetric group on certain subsets of the 
partial transformation semigroup. 
In this note, we present an application of this 
development in a basic Lie theory context.
\vspace{.5cm}

Let $G$ be a Lie group and let 
$\mathfrak{g}$ denote the Lie 
algebra of $G$. 
The conjugation action $G\times G\rightarrow G$,
$(g,h) \mapsto ghg^{-1}$, $g,h\in G$
leads to a linear representation of $G$
on its tangent space at the identity element,
\begin{align}
\text{Ad} : G & \rightarrow \text{Aut}(\mathfrak{g}) \notag \\
g &\mapsto \text{Ad}_g.\label{A:adjoint}
\end{align}
The representation (\ref{A:adjoint}), 
which is called the adjoint representation, 
has a fundamental place in the 
structure theory of Lie groups. 
It has a concrete description when $G$ 
is a closed subgroup of $\mt{GL}_n(\C)$, 
the general linear group of $n\times n$ matrices. 
In this case, the Lie algebra $\mathfrak{g}$ 
of $G$ is a Lie subalgebra of the $n\times n$
matrices, and the adjoint representation 
(\ref{A:adjoint}) is given by 
$$
\text{Ad}_g ( X) = g X g^{-1},
$$ 
for $g\in G,\ X\in \mathfrak{g}$.
\vspace{.5cm}

Let $\mc{S}_n$ denote the group 
of permutations of the set $\{1,\dots, n\}$. 
We view $\mc{S}_n$ as a subgroup 
of $\mt{GL}_n(\C)$ by identifying its
elements with $n\times n$ 0/1 matrices
with at most one 1 in each row and each column.
The basic representation theoretic 
question that we address here is the
following: 
\begin{quote}
What are the irreducible constituents of the 
$S_n$-representation that is obtained from 
(\ref{A:adjoint}) by restriction? 
\end{quote}

Surprisingly, even though the adjoint representation
is at the heart of Lie theory, to the best of our knowledge
the answer to our question is missing from the literature, 
at least, it is not presented in the way that we are answering it. 
To state our theorem we set up the notation.

It is well known that the finite dimensional 
irreducible representations over $\Q$ of $\mc{S}_n$
are indexed by the integer partitions of $n$. 
The Frobenius character map, $V\mapsto F_V$ 
is an assignment of symmetric functions to the 
finite dimensional representations of $\mc{S}_n$. 
(We will explain this in more detail in the sequel.)
In particular, 
if $V$ is the irreducible representation determined 
by an integer partition $\lambda$, then 
$F_V$ is a Schur symmetric function,
denoted by $s_\lambda$. 
Furthermore, if $V= \bigoplus V_i$ is a decomposition
of $V$ into $\mc{S}_n$-submodules, then 
$F_V = \sum F_{V_i}$. 
Our first main result is as follows.

\begin{Theorem}\label{T:main1}
Let $n$ be an integer such that $n\geq 2$. 
The Frobenius character of the adjoint representation of $\mc{S}_n$ on 
$\mt{Mat}_n(\C)$ is given by 
\begin{align}
F_{\mt{Mat}_n(\C)} =
\begin{cases}
2s_2 + 2s_{1,1} & \text{ if $n=2$}; \\
2s_3+3s_{2,1}+s_{1,1,1} & \text{ if $n=3$};\\
2s_n + 3s_{n-1,1} + s_{n-2,2} + s_{n-2,1,1} & \text{ if $n\geq 4$}.
\end{cases}
\end{align}
\end{Theorem}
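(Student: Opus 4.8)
The plan is to recognize the adjoint module as an honest permutation module of $\mc{S}_n$ and then decompose it by orbits. For $\sigma \in \mc{S}_n$ write $P_\sigma$ for its permutation matrix, so $P_\sigma e_j = e_{\sigma(j)}$ on the standard basis. A one-line computation shows that the adjoint action $\sigma \cdot X = P_\sigma X P_\sigma^{-1}$ sends the matrix unit $E_{ij}$ to $E_{\sigma(i)\,\sigma(j)}$. Hence, as an $\mc{S}_n$-module, $\mt{Mat}_n(\C)$ is the permutation module $\C[\{1,\dots,n\} \times \{1,\dots,n\}]$ for the diagonal action of $\mc{S}_n$ on ordered pairs --- equivalently, the tensor square $\C^n \otimes \C^n$ of the defining permutation representation. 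This action has exactly two orbits: the diagonal $\Delta = \{(i,i)\}$ and its complement $\{(i,j) : i \ne j\}$, giving $\mt{Mat}_n(\C) \cong \C[\Delta] \oplus \C[\{(i,j):i\neq j\}]$ as $\mc{S}_n$-modules.

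The next step is to read off the Frobenius characteristic of each summand. The stabilizer of $(1,1)$ is $\mc{S}_{\{2,\dots,n\}} \cong \mc{S}_{n-1}$, so $\C[\Delta]$ is the Young permutation module $M^{(n-1,1)}$; the stabilizer of $(1,2)$ is $\mc{S}_{\{3,\dots,n\}} \cong \mc{S}_{n-2}$, so the off-diagonal summand is $M^{(n-2,1,1)}$. Using the standard identity that the Frobenius characteristic of $M^\mu = \mathrm{Ind}_{\mc{S}_\mu}^{\mc{S}_n} \mathbf{1}$ is the complete homogeneous symmetric function $h_\mu = h_{\mu_1} h_{\mu_2} \cdots$, we obtain
\begin{align}
F_{\mt{Mat}_n(\C)} = h_{n-1} h_1 + h_{n-2} h_1^2 .
\end{align}

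It then remains to pass to the Schur basis by repeated application of Pieri's rule. One has $h_{n-1} h_1 = s_n + s_{n-1,1}$, and $h_{n-2} h_1^2 = (s_{n-1} + s_{n-2,1})\, h_1$; expanding the latter by adding one box in all admissible positions and collecting terms produces the stated formula for $n \ge 4$. The cases $n = 2$ and $n = 3$ are the same computation, with the one wrinkle that certain partitions generated by Pieri's rule either coincide or fail to be partitions at all (for instance $h_{n-2} = h_0 = 1$ when $n = 2$, and the ``would-be'' term $s_{n-2,2}$ simply does not arise when $n = 3$), and it is exactly this degeneration that collapses the generic answer into the two exceptional lines. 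I expect no conceptual difficulty anywhere; the only point requiring genuine care is this low-rank bookkeeping, since for $n \le 3$ the constituents $s_{n-2,2}$ and $s_{n-2,1,1}$ of the $n \ge 4$ formula degenerate and must be tracked by hand rather than specialized from the generic case. As a built-in consistency check --- and in anticipation of the symmetric/skew-symmetric refinement taken up below --- the identical orbit analysis gives the finer splitting $\mt{Mat}_n(\C) \cong \mathrm{Sym}^2 \C^n \oplus \Lambda^2 \C^n$ with $F_{\mathrm{Sym}^2 \C^n} = h_{n-1} h_1 + h_{n-2} h_2$ and $F_{\Lambda^2 \C^n} = e_2\, h_{n-2}$ (using $h_1^2 = h_2 + e_2$), whose sum recovers the displayed formula.
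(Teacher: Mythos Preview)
Your proof is correct. Structurally it coincides with the paper's argument: both observe that $\mc{S}_n$ permutes the matrix units $E_{i,j}$, split $\mt{Mat}_n(\C)$ into the diagonal and off-diagonal orbits, obtain the orbit Frobenius characters $s_1 s_{n-1}$ and $s_1^2 s_{n-2}$ (your $h_{n-1}h_1$ and $h_{n-2}h_1^2$), and finish with Pieri's rule together with the same small-$n$ bookkeeping. The only genuine difference is in how those two orbit characters are justified. The paper interprets each $E_{i,j}$ as a labeled loop-augmented rooted forest and reads the characters off from the general machinery of Theorem~\ref{T:Master 1} and~\cite[Corollary~6.2]{Can17}; you instead compute the point stabilizers $\mc{S}_{n-1}$ and $\mc{S}_{n-2}$ directly and recognize the orbits as the Young permutation modules $M^{(n-1,1)}$ and $M^{(n-2,1,1)}$. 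Your route is more elementary and entirely self-contained, and your closing observation that $h_1^2 = h_2 + e_2$ already anticipates the $\mt{Sym}_n/\mt{Skew}_n$ split; the paper's route is chosen deliberately to showcase the loop-augmented forest framework, whose real payoff is for partial transformations more complicated than those arising here.
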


\vspace{.5cm}

The space of symmetric $n\times n$-matrices, 
which we denote by $\mt{Sym}_n(\C)$, is closed
under the adjoint action of the orthogonal group, 
$\mt{O}_n(\C):=\{ g \in \mt{GL}_n(\C):\ g g^\top = id \}$.
However, $\mt{Sym}_n(\C)$ is not closed under 
the adjoint action of $\mt{GL}_n(\C)$.
Nevertheless, since $\mc{S}_n$ is a subgroup of 
$\mt{O}_n(\C)$, we see that the representation 
\begin{align}\label{A:adjoint on sym}
\mt{Ad}: \mc{S}_n \rightarrow \mt{Aut}( \mt{Sym}_n(\C))
\end{align}
is defined. Moreover, since there is a direct 
sum decomposition
$$
\mt{Mat}_n(\C) = \mt{Sym}_n(\C) \oplus \mt{Skew}_n(\C),
$$
where $\mt{Skew}_n(\C)$ is the space of $n\times n$
skew-symmetric matrices, we have the complementary 
adjoint representation
\begin{align}\label{A:adjoint on skew}
\mt{Ad}: \mc{S}_n \rightarrow \mt{Aut}( \mt{Skew}_n(\C))
\end{align}
as well.

Our second main result is the following

\begin{Theorem}\label{T:main2}
Let $n$ be an integer such that $n\geq 2$. 
The Frobenius character of the adjoint representation (\ref{A:adjoint on sym}) of $\mc{S}_n$ on 
$\mt{Sym}_n(\C)$ is given by 
\begin{align}
F_{\mt{Sym}_n(\C)} =
\begin{cases}
2s_2 + s_{1,1} & \text{ if $n=2$}; \\
2s_3+3s_{2,1} & \text{ if $n=3$};\\
2s_n + 2s_{n-1,1} + s_{n-2,2} & \text{ if $n\geq 4$}.
\end{cases}
\end{align}
\end{Theorem}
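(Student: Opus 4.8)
The plan is to recognize $\mt{Sym}_n(\C)$, equipped with the adjoint action (\ref{A:adjoint on sym}), as an explicit permutation module of $\mc{S}_n$, and then to extract its Schur expansion via Pieri's rule. Let $M=\C^n$ denote the natural permutation representation of $\mc{S}_n$, with basis $e_1,\dots,e_n$ and action $\sigma\cdot e_i=e_{\sigma(i)}$. A permutation matrix $P_\sigma$ is orthogonal, and on the matrix units $E_{ij}$ it acts by $P_\sigma E_{ij}P_\sigma^{-1}=E_{\sigma(i)\sigma(j)}$; consequently the assignment $e_i^2\mapsto E_{ii}$ and $e_ie_j\mapsto E_{ij}+E_{ji}$ (for $i<j$) defines an isomorphism of $\mc{S}_n$-modules $\mt{Sym}^2 M\cong\mt{Sym}_n(\C)$. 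Likewise $e_i\wedge e_j\mapsto E_{ij}-E_{ji}$ identifies $\Lambda^2 M$ with $\mt{Skew}_n(\C)$, which I would reuse for the companion statement on the skew-symmetric matrices.

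Next I would describe $\mt{Sym}^2 M$ concretely: as an $\mc{S}_n$-module it is the permutation module on the set of two-element multisets from $\{1,\dots,n\}$, and this $\mc{S}_n$-set is the disjoint union of two orbits. The first consists of the $n$ ``diagonal'' multisets $\{i,i\}$, on which $\mc{S}_n$ acts as on $\{1,\dots,n\}$ itself (point stabilizer $\mc{S}_{n-1}\times\mc{S}_1$); the second consists of the $\binom{n}{2}$ honest pairs $\{i,j\}$ with $i\ne j$, on which $\mc{S}_n$ acts as on $\binom{[n]}{2}$ (stabilizer $\mc{S}_{n-2}\times\mc{S}_2$). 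Hence
\[
\mt{Sym}_n(\C)\ \cong\ \C\big[\{1,\dots,n\}\big]\ \oplus\ \C\big[\tbinom{[n]}{2}\big]
\]
as $\mc{S}_n$-modules. Invoking the standard fact that the Frobenius characteristic of $\mathrm{Ind}_{\mc{S}_{a_1}\times\cdots\times\mc{S}_{a_k}}^{\mc{S}_n}\mathbf 1$ is $h_{a_1}\cdots h_{a_k}$, this yields
\[
F_{\mt{Sym}_n(\C)}\ =\ h_{n-1}h_1\ +\ h_{n-2}h_2 .
\]

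The last step is to expand each summand by Pieri's rule: $h_{n-1}h_1=s_n+s_{n-1,1}$ for every $n\ge 2$, while $h_{n-2}h_2=s_n+s_{n-1,1}+s_{n-2,2}$ when $n\ge 4$, and the two add up to $2s_n+2s_{n-1,1}+s_{n-2,2}$, as asserted. I do not expect any genuine obstacle here; the only thing requiring care is the degeneration of the Pieri expansion for small $n$. For $n=3$ the shape $(1,2)$ is not a partition, so $h_1h_2=s_3+s_{2,1}$ and the total becomes $2s_3+2s_{2,1}$; for $n=2$ one has $h_0h_2=s_2$ and $h_1h_1=s_2+s_{1,1}$, giving $2s_2+s_{1,1}$. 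As an alternative derivation, one could instead start from Theorem~\ref{T:main1}, compute $F_{\mt{Skew}_n(\C)}$ for $\Lambda^2 M$ by the same orbit bookkeeping, and subtract; but the direct computation above is the cleaner route.
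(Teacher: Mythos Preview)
Your argument is essentially the same as the paper's: both split $\mt{Sym}_n(\C)$ into the diagonal piece spanned by the $E_{i,i}$ and the off-diagonal piece spanned by the $F_{i,j}=E_{i,j}+E_{j,i}$, identify these as the permutation modules $\mt{Ind}_{\mc{S}_1\times\mc{S}_{n-1}}^{\mc{S}_n}\mathbf 1$ and $\mt{Ind}_{\mc{S}_2\times\mc{S}_{n-2}}^{\mc{S}_n}\mathbf 1$, and expand $s_1s_{n-1}+s_2s_{n-2}$ by Pieri. Your packaging via the isomorphism $\mt{Sym}_n(\C)\cong\mt{Sym}^2 M$ is a convenient shortcut to the same orbit analysis, but not a genuinely different route.

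One point worth flagging: for $n=3$ you obtain $2s_3+2s_{2,1}$, whereas the theorem as stated reads $2s_3+3s_{2,1}$. Your answer is the correct one. A dimension count confirms it ($2\cdot 1+2\cdot 2=6=\dim\mt{Sym}_3(\C)$, while $2+3\cdot 2=8$), and in fact the paper's own proof, combining (\ref{A:FoF}) with (\ref{A:type 2}) at $n=3$, also yields $2s_3+2s_{2,1}$. The displayed $n=3$ line in the theorem (and correspondingly in Corollary~\ref{T:main3}, where $\mt{Skew}_3(\C)$ should carry $s_{2,1}+s_{1,1,1}$) is a misprint; the generic formulas already specialize correctly once one drops the illegal shape $(n-2,2)=(1,2)$.
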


\begin{Corollary}\label{T:main3}
Let $n$ be an integer such that $n\geq 2$. 
The Frobenius character of the adjoint representation 
(\ref{A:adjoint on skew}) of $\mc{S}_n$ on 
$\mt{Skew}_n(\C)$ is given by 
\begin{align}
F_{\mt{Skew}_n(\C)} =
\begin{cases}
s_{1,1} & \text{ if $n=2$}; \\
s_{1,1,1} & \text{ if $n=3$};\\
s_{n-1,1} + s_{n-2,1,1} & \text{ if $n\geq 4$}.
\end{cases}
\end{align}
\end{Corollary}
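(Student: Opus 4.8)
The plan is to derive Corollary~\ref{T:main3} directly from Theorems~\ref{T:main1} and~\ref{T:main2} by subtraction. Since the adjoint action of $\mc{S}_n$ respects the direct sum decomposition $\mt{Mat}_n(\C) = \mt{Sym}_n(\C) \oplus \mt{Skew}_n(\C)$ (conjugation by a permutation matrix $g$, which is orthogonal, sends symmetric matrices to symmetric matrices and skew-symmetric matrices to skew-symmetric matrices, because $(gXg^{-1})^\top = (gXg^\top)^\top = g X^\top g^\top = g X^\top g^{-1}$), the additivity of the Frobenius characteristic under $\mc{S}_n$-module direct sums gives
\begin{align}
F_{\mt{Skew}_n(\C)} = F_{\mt{Mat}_n(\C)} - F_{\mt{Sym}_n(\C)}. \notag
\end{align}

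First I would record this decomposition observation carefully, noting that it is exactly what makes the representation~(\ref{A:adjoint on skew}) well defined and complementary to~(\ref{A:adjoint on sym}). Then I would substitute the three cases from Theorem~\ref{T:main1} and the three cases from Theorem~\ref{T:main2} and cancel term by term. For $n=2$: $(2s_2 + 2s_{1,1}) - (2s_2 + s_{1,1}) = s_{1,1}$. For $n=3$: $(2s_3 + 3s_{2,1} + s_{1,1,1}) - (2s_3 + 3s_{2,1}) = s_{1,1,1}$. For $n\geq 4$: $(2s_n + 3s_{n-1,1} + s_{n-2,2} + s_{n-2,1,1}) - (2s_n + 2s_{n-1,1} + s_{n-2,2}) = s_{n-1,1} + s_{n-2,1,1}$. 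This matches the claimed formula in all three ranges.

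There is essentially no obstacle here: the only thing to verify beyond bookkeeping is that all coefficients in the difference are nonnegative, which confirms consistency (and would flag an error in either theorem if it failed) but is automatic once we know $\mt{Skew}_n(\C)$ is a genuine $\mc{S}_n$-submodule. For completeness one may also cross-check dimensions: $\dim \mt{Skew}_n(\C) = \binom{n}{2}$, and evaluating the Schur functions via the hook-content formula (or the number of standard Young tableaux scaled appropriately through the exponential specialization) on the right-hand side should reproduce $\binom{n}{2}$; e.g. for $n \geq 4$ the dimensions of $s_{n-1,1}$ and $s_{n-2,1,1}$ are $n-1$ and $\binom{n-1}{2}$ respectively, summing to $\binom{n}{2}$. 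The hardest part, if any, is purely expository: making sure the reader sees why the $\mc{S}_n$-action (as opposed to the full $\mt{GL}_n(\C)$-action, which does \emph{not} preserve $\mt{Sym}_n(\C)$) is the right framework, a point already emphasized in the paragraph preceding~(\ref{A:adjoint on skew}).
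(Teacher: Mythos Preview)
Your proposal is correct and follows exactly the paper's approach: the paper's proof is the one-line observation that $F_{\mt{Mat}_n(\C)}=F_{\mt{Sym}_n(\C)}+F_{\mt{Skew}_n(\C)}$, so the corollary follows from Theorems~\ref{T:main1} and~\ref{T:main2} by subtraction. You simply spell out the term-by-term cancellations and add an optional dimension check, which the paper omits.
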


\section{Preliminaries}


\subsection{Symmetric functions and plethysm.}

The {\em   $ k $-th power-sum symmetric function}, denoted by $ p_k $, is the sum of $ k $-th powers of the 
variables. For an integer partition $\lambda = (\lambda_1 \geq \ldots \geq \lambda_l)$, we define $p_\lambda$ to be 
equal to $\prod_i p_{\lambda_i} $. 
The {\em $k$-th complete symmetric function}, $h_k$, is defined to be the 
sum of all monomials $x_{i_1}^{a_1}\cdots x_{i_r}^{a_r}$ with $ \sum a_i =k $ and $ h_\lambda$ is defined to 
be equal to $\prod_{i=1}^l h_{\lambda_i}$. The {\em Schur function associated with $ \lambda$}, denoted by 
$s_\lambda$, is the symmetric function defined by the determinant $\det (h_{\lambda_i+j - i})_{ i,j=1}^l$. 
In particular, we have, for all $k\geq 1$, that $s_{(k)} = h_k$. For easing the notation, we denote $s_{(k)}$ by
$s_k$. 

The type of a conjugacy class $\sigma$ in $\mc{S}_n$ is the partition $\lambda$ 
whose parts correspond to the lengths of the cycles that 
appear in the cycle decomposition of an element $x\in \sigma$. 
It is well known that the type is independent of the element $x$, furthermore, 
it uniquely determines the conjugacy class.

The 
power sum and Schur symmetric functions will play special roles in our computations via the {\em Frobenius 
character} map 
\begin{align*}
F : \text{ class functions on $\mc{S}_n$ } & \rightarrow  \text{ symmetric functions} \\
\delta_\sigma & \mapsto \frac{1}{n!}\ p_\lambda,
\end{align*}
where $ \sigma \subset \mc{S}_n $ is a conjugacy class of type $ \lambda $ and $ \delta_\sigma$ is the indicator 
function
\begin{align*}
\delta_\sigma( x) =
\begin{cases}
1 & \text{ if } x\in \sigma;\\
0 & \text{ otherwise.}
\end{cases}
\end{align*}
It turns out that if $ \chi^\lambda$ is the irreducible character of $ \mc{S}_n $ indexed by the partition $ \lambda $, 
then $F (\chi^\lambda) = s_\lambda $. In the sequel, we will not distinguish between representations 
of $ \mc{S}_n $ and their corresponding characters. In particular, we will often write the {\em Frobenius character of an 
orbit} to mean the image under $F $ of the character of the representation of $ \mc{S}_n $ that is defined 
by the action on the orbit. If $V$ is an $\mc{S}_n$-representation, then we will denote its Frobenius character by $F_V$.

For two irreducible characters $ \chi^\mu$ and $ \chi^\lambda$ indexed by partitions $ \lambda $ and $\mu$,  
the Frobenius character image of the ``plethysm'' $\chi^\lambda [ \chi^\mu]$ is the plethystic substitution 
$s_\lambda [ s_\mu]$ of the corresponding Schur functions. Roughly speaking, the plethysm of the Schur 
function $s_\lambda$ with $s_\mu$ is the symmetric function obtained from $s_\lambda$ by substituting the 
monomials of $s_\mu$ for the variables of $s_\lambda$. In the notation of~\cite{LoehrRemmel}; the {\em plethysm} 
of symmetric functions is the unique map $[\cdot ]:\ \Lambda\times \Lambda \rightarrow \Lambda$ satisfying the 
following three axioms: 
\begin{enumerate}
\item[P1.] For all $m,n\geq 1$, $p_m [p_n] = p_{mn}$.
\item[P2.] For all $m\geq 1$, the map $g \mapsto p_m [ g]$, $g\in \Lambda$ defines a $\Q$-algebra 
homomorphism on $\Lambda$. 
\item[P3.] For all $g\in \Lambda$, the map $h \mapsto h [ g]$, $h\in \Lambda$ defines a $\Q$-algebra 
homomorphism on $\Lambda$. 
\end{enumerate}

Although the problem of computing the plethysm of two (arbitrary) symmetric functions is very difficult, there 
are some useful formulas for Schur functions:
\begin{align}\label{A:plethysm formula 1}
s_\lambda [ g+h] &= \sum_{\mu,\nu} c_{\mu,\nu}^\lambda (s_\mu [ g]) (s_\nu [ h]),
\end{align}
and 
\begin{align}\label{A:plethysm formula 2}
s_\lambda [gh] &= \sum_{\mu,\nu} \gamma_{\mu,\nu}^\lambda (s_\mu [ g]) (s_\nu [h]).
\end{align}
Here, $ g$ and $ h$ are arbitrary symmetric functions, $ c_{\mu,\nu}^\lambda $ is a scalar, and $\gamma_{
\mu,\nu}^\lambda$ is $\frac{1}{n!} \langle \chi^\lambda, \chi^\mu \chi^\nu \rangle$, where the pairing stands
for the standard Hall inner product on characters.

In~(\ref{A:plethysm formula 1}) the summation is over all pairs of partitions $\mu,\nu \subset \lambda$, and 
the summation in~(\ref{A:plethysm formula 2}) is over all pairs of partitions $\mu,\nu$ such that $  |\mu| =  |
\nu | = |\lambda|$. In the special case when $\lambda = (n)$, or $(1^n)$ we have
\begin{align}
s_{(n)} [gh] &= \sum_{\lambda \vdash n} (s_\lambda [g]) (s_\lambda [h]), \label{A:complete}\\
s_{(1^n)} [gh]&= \sum_{\lambda \vdash n}(s_\lambda[g])(s_{\lambda'}[h]), \label{A:elementary}
\end{align}
where $\lambda'$ denotes the conjugate of $\lambda$.

\subsection{Background on partial transformations.}

Let $n$ denote a positive integer, $[n]$ and $\overline{[n]}$ denote the sets $\{1,\dots, n\}$ and $[n]\cup 
\{0\}$, respectively. We will use the following basic notation for our semigroups: 
\begin{center}
\begin{tabular}{c c l}
$\mc{F}ull_n$ &:& the full transformation semigroup on $\overline{[n]}$; \\
$\mc{P}_n$ &:& the semigroup of partial transformations on $[n]$; \\
$\mc{C}_n$ &:& the set of nilpotent partial transformations on $[n]$.
\end{tabular}
\end{center}
A partial transformation on $[n]$ is a function $f: A \rightarrow [n] $, where $A$ is a nonempty 
subset of $[n]$. A {\em full transformation} on $\overline{[n]}$ is a function $g:\overline{[n]} \rightarrow 
\overline{[n]}$. We note that there is an ``extension by 0'' morphism from partial transformations on $[n]$ into full 
transformations on $\overline{[n]}$,
\begin{align*}
\varphi_0 \ : \ & \mc{P}_n  \rightarrow \mc{F}ull_n\\ 
& f  \longmapsto  \varphi_0(f)
\end{align*} 
which is defined by
$$
\varphi_0(f)(i) = 
\begin{cases}
f(i) & \text{ if $i$ is in the domain of $f$}; \\
0 & \text{ otherwise.}
\end{cases}
$$
The map $ \varphi$ is an injective semigroup homomorphism. 
Since $ \mc{F}ull_n $ contains a zero transformation it makes sense to talk about 
nilpotent partial transformations in $\mc{P}_n$. In particular, $\mc{C}_n$ 
is well defined although its definition requires the embedding of $\mc{P}_n$ into $\mc{F}ull_n$. 
The unit group of $\mc{P}_n$ is equal to $\mc{S}_n$, therefore, its 
conjugation action of $\mc{P}_n$ makes sense. Furthermore, the set of 
nilpotent partial transformations is stable under this action of $\mc{S}_n$. 

The combinatorial significance of $\mc{C}_n$ stems from the fact that there 
is a bijection between $\mc{C}_n$ and the set of labeled rooted forests on $ n$ vertices. 
(Hence, $|\mc{C}_n|=(n+1)^{n-1}$.)
The conjugation action of $\mc{S}_n$ on $\mc{C}_n$ 
translates into the permutation action of $\mc{S}_n$ on the labels. 
This idea, which we started to use in~\cite{Can17}, extends in a rather natural way
to all directed graphs that correspond to the elements of $\mc{P}_n$. 
Indeed, let $\tau$ be a partial transformation from $\mc{P}_n$. 
We view $\tau$ as an $n\times n$ 0/1 matrix with at most one 1 in each column,
so, it defines a directed, labeled graph on $n$ vertices; 
there is a directed edge from the $i$-th vertex
to the $j$-th vertex if the $(i,j)$-th entry of the matrix is 1. The underlying graph of this labeled 
directed graph depends only on the $\mc{S}_n$-conjugacy class of $\tau$. 
To explain the consequences of this identification, 
next, we will re-focus on the elements of $\mc{C}_n$ 
and on the corresponding labeled rooted forests. What we are going to state for $\mc{C}_n$ 
extends to all partial transformations and to the corresponding labeled directed graphs.

A pair $(\tau,\phi)$, where $\tau$ is a rooted forest on $n$ vertices and $ \phi $ is a bijective map  
from $[n]$ onto the vertex set of $ \tau $ is called a labeled rooted forest. 
As a convention, when we talk about labeled rooted forests, 
we will omit writing the corresponding labeling function despite the fact that the action of $\mc{S}_n $ 
does not change the underlying forest but the labeling function only. In particular, 
when we write $ \mc{S}_n\cdot \tau$ we actually mean the orbit 
\begin{align}\label{A:an orbit}
\mc{S}_n \cdot (\tau,\phi) = \{ (\tau,\phi'):\ \phi' = \sigma \cdot \phi,\ \sigma \in \mc{S}_n \}.
\end{align}
The right hand side of (\ref{A:an orbit}) is an $\mc{S}_n$-set, hence it defines a representation of $\mc{S}_n$. 
More generally, to any partial transformation $\tau$ in $\mc{P}_n$, we associate the representation 
corresponding to the orbit $\mc{S}_n \cdot \tau$. We refer to the resulting representation by the odun 
of $\tau$, and denote it by $o(\tau)$. 

The odun depends only on $\tau$, not on the labels, so, we write $ \mt{Stab}_{\mc{S}_n} (\tau)$ 
to denote the stabilizer 
subgroup of the pair $(\tau,\phi) $. As an $\mc{S}_n$-module, the vector space of functions on the right 
cosets, that is $\C[ \mc{S}_n/\text{Stab}_{ \mc{S}_n }(\tau)] $ is isomorphic to the odun of $ \tau$. 
As a representation of $\mc{S}_n$, this is  
equivalent to the induced representation $ \mt{Ind}_{\mt{Stab}_{\mc{S}_n} (\tau)}^{\mc{S}_n}\mathbf{1}$. 
Next, following~\cite{Can17}, we present an example to demonstrate the computation of the Frobenius character 
of $ \mt{Ind}_{\mt{Stab}_{\mc{S}_n} (\tau)}^{\mc{S}_n}\mathbf{1}$.
\begin{Example}
Let $ \tau$ be the rooted forest depicted in Figure~\ref{F:Example1} and let $ \tau_i $, $ i=1, 2, 3 $ denote 
its connected components (from left to right in the figure). 
\begin{figure}[htp]
\centering
\begin{tikzpicture}[scale=.75]
\begin{scope}[xshift=3cm]
\node at (0,0) {$\bullet$};
\node at (1,1) {$\bullet$};
\node at (-1,1) {$\bullet$};
\node at (1,2) {$\bullet$};
\node at (2,3) {$\bullet$};
\node at (0,3) {$\bullet$};
\node at (-1,2) {$\bullet$};
\draw[-, thick] (0,0) to  (1,1);
\draw[-, thick] (0,0) to  (-1,1);
\draw[-, thick] (-1,1) to  (-1,2);
\draw[-, thick] (1,1) to  (1,2);
\draw[-, thick] (1,2) to  (2,3);
\draw[-, thick] (1,2) to  (0,3);
\end{scope}
\begin{scope}[xshift=0cm]
\node at (0,0) {$\bullet$};
\node at (1,1) {$\bullet$};
\node at (-1,1) {$\bullet$};
\node at (1,2) {$\bullet$};
\node at (2,3) {$\bullet$};
\node at (0,3) {$\bullet$};
\node at (-1,2) {$\bullet$};
\draw[-, thick] (0,0) to  (1,1);
\draw[-, thick] (0,0) to  (-1,1);
\draw[-, thick] (-1,1) to  (-1,2);
\draw[-, thick] (1,1) to  (1,2);
\draw[-, thick] (1,2) to  (2,3);
\draw[-, thick] (1,2) to  (0,3);
\end{scope}
\begin{scope}[xshift=-3.5cm]
\node at (0,0) {$\bullet$};
\node at (0,1) {$\bullet$};
\node at (-.5,2) {$\bullet$};
\node at (-1.5,2) {$\bullet$};
\node at (.5,2) {$\bullet$};
\node at (1.5,2) {$\bullet$};
\draw[-, thick] (0,0) to  (0,1);
\draw[-, thick] (0,1) to  (-1.5,2);
\draw[-, thick] (0,1) to  (1.5,2);
\draw[-, thick] (0,1) to  (-.5,2);
\draw[-, thick] (0,1) to  (.5,2);
\end{scope}
\end{tikzpicture}
\caption{An example.}
\label{F:Example1}
\end{figure}
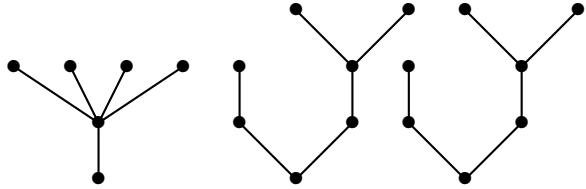
Let $F_{o(\tau)}$ and $F_{o(\tau_i)}$, $i=1,2,3$ denote the corresponding Frobenius characters. Since
$\tau_1 \neq \tau_2 = \tau_3$, we have 
\begin{align}\label{A:F0}
F_{o(\tau)} &= F_{o(\tau_1)} \cdot s_2[ F_{o(\tau_2)}].
\end{align}
Here, $ s_2 $ is the Schur function $ s_{(2)} $, the bracket stands for the plethysm of symmetric functions and 
the dot stands for ordinary multiplication. More generally, if 
a connected component $\tau'$, which of course is a rooted tree appears $k$-times in a forest $ \tau $, then 
$F_{o(\tau)}$ has $s_k [ F_{o(\tau')}]$ as a factor. Now we proceed to explain the computation of the 
Frobenius character of a rooted tree. As an example we use $ F_{o(\tau_1)} $ of Figure~\ref{F:Example1}. 
The combinatorial rule that we obtained in~\cite{Can17} is simple; it is the removal of the root from the tree. The 
effect on the Frobenius character of this simple rule is as follows: Let $ \tau_1'$ denote the rooted forest that 
we obtain from $ \tau_1$ by removing the root. Then $ F_{o(\tau_1)}  = s_1 \cdot F_{o(\tau_1')} $. Thus, 
by the repeated application of this rule and the previous factorization rule, we obtain 
$F_{o(\tau_1)} = s_1\cdot F_{o(\tau_1')}= s_1\cdot s_1 \cdot s_4[ s_1]$.
It follows from the definition of plethysm that $s_k[s_1]=s_k$ for any nonnegative integer $k$.
Therefore, 
\begin{align}\label{A:F1}
F_{o(\tau_1)} = s_1^2\cdot s_4.
\end{align}
We compute $ F_{o(\tau_2)} $ by the same method; 
\begin{align}\label{A:F2}
F_{o(\tau_2)}= s_1^5 \cdot s_2.
\end{align}
By putting (\ref{A:F0})--(\ref{A:F2}) together, we arrive at the following satisfactory expression 
for the Frobenius character of $\tau$:
\begin{align}\label{A:F character of tau}
F_{o(\tau)}&= F_{o(\tau_1)} \cdot s_2[F_{o(\tau_2)}]= s_1^2\cdot s_4\cdot s_2[s_1^5 \cdot s_2].
\end{align}
Note that the expansion of $s_2[s_1^5 \cdot s_2]$ in the Schur basis is computable
by a recursive method by applying~(\ref{A:complete}) and using Thrall's formula~\cite{Thrall} 
(see~\cite[Chapter I, Section 8, Example 9]{Mac}).
However, the resulting expression is rather large, so, we omit writing it here.
\end{Example}

\vspace{.5cm}
A {\em loop-augmented forest} is a rooted forest such that there is at most one loop at each of its roots. See, for 
example, Figure~\ref{F:pruned}, where we depict a loop-augmented forest on 22 vertices and four loops.
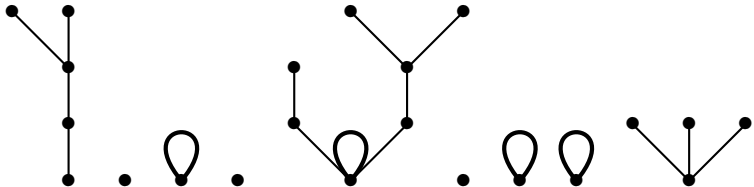
\begin{figure}[htp]
\centering
\begin{tikzpicture}[scale=.75]
\begin{scope}
\node at (0,0) {$\bullet$};
\node at (1,1) {$\bullet$};
\node at (-1,1) {$\bullet$};
\node at (1,2) {$\bullet$};
\node at (2,3) {$\bullet$};
\node at (0,3) {$\bullet$};
\node at (-1,2) {$\bullet$};
\draw[-, thick] (0,0) to  (1,1);
\draw[-, thick] (0,0) to  (-1,1);
\draw[-, thick] (-1,1) to  (-1,2);
\draw[-, thick] (1,1) to  (1,2);
\draw[-, thick] (1,2) to  (2,3);
\draw[-, thick] (1,2) to  (0,3);
\end{scope}
\begin{scope}[scale=3]
\draw[ultra thick] (0,0)  to[in=50,out=130, loop] (0,0);
\end{scope}
\begin{scope}[xshift=-5cm]
\node at (0,0) {$\bullet$};
\node at (0,1) {$\bullet$};
\node at (0,2) {$\bullet$};
\node at (0,3) {$\bullet$};
\node at (-1,3) {$\bullet$};
\draw[-, thick] (0,0) to  (0,1);
\draw[-, thick] (0,1) to  (0,2);
\draw[-, thick] (0,2) to  (0,3);
\draw[-, thick] (0,2) to  (-1,3);
\end{scope}
\begin{scope}[xshift=-4cm]
\node at (0,0) {$\bullet$};
\end{scope}
\begin{scope}[xshift=-3cm,scale=3]
\node at (0,0) {$\bullet$};
\draw[ultra thick] (0,0)  to[in=50,out=130, loop] (0,0);
\end{scope}
\begin{scope}[xshift=-2cm]
\node at (0,0) {$\bullet$};
\end{scope}
\begin{scope}[xshift=2cm]
\node at (0,0) {$\bullet$};
\end{scope}
\begin{scope}[xshift=3cm,scale=3]
\node at (0,0) {$\bullet$};
\draw[ultra thick] (0,0)  to[in=50,out=130, loop] (0,0);
\end{scope}
\begin{scope}[xshift=4cm,scale=3]
\node at (0,0) {$\bullet$};
\draw[ultra thick] (0,0)  to[in=50,out=130, loop] (0,0);
\end{scope}
\begin{scope}[xshift=6cm]
\node at (0,0) {$\bullet$};
\node at (1,1) {$\bullet$};
\node at (-1,1) {$\bullet$};
\node at (0,1) {$\bullet$};
\draw[-, thick] (0,0) to  (1,1);
\draw[-, thick] (0,0) to  (-1,1);
\draw[-, thick] (0,0) to  (0,1);
\end{scope}
\end{tikzpicture}
\caption{A loop-augmented forest.}
\label{F:pruned}
\end{figure}
It is a well known variation of the Cayley's theorem that the number of labeled forests on $ n$ vertices with 
$k$ roots is equal to ${n-1\choose k-1} n^{n-k}$. See~\cite{MR0460128}, Theorem D, pg 70. It follows that
the number of loop-augmented forests on $ n $ vertices with $ k $ roots is $2^k{n-1\choose k-1} n^{n-k} $. 
It follows from generating function manipulations that the number of loop-augmented forests on $n$ vertices 
for $ n\geq 2$ is $2n^{n-3}$.

Next, we will explain how to interpret loop-augmented forests in terms of partial functions. Let $\sigma$ be 
a loop-augmented forest. Then some of the roots of $\sigma$ have loops. There is still a partial function for 
$\sigma$, as defined in the previous paragraph for a rooted forest. The loops in this case correspond to the 
``fixed points'' of the associated function. Indeed, for the loop at the $i$-th vertex we have a 1 at the $(i,i)$-th
entry of the corresponding matrix representation of the partial transformation. 
Recall that the permutation action of $ \mc{S}_n $ on the labels translates to the conjugation action on the 
incidence matrix. By an appropriate relabeling of the vertices, the incidence matrix of a loop-augmented 
rooted forest can be brought to an upper-triangular form.
Clearly, the conjugates of a nilpotent (respectively unipotent) matrix are still nilpotent 
(respectively unipotent). Let $U$ be an arbitrary upper triangular matrix. Then $U$ is equal to a sum of the 
form $ D+N $, where $ D$ is a diagonal matrix and $N$ is a nilpotent matrix. If $ \sigma $ is a permutation matrix 
of the same size as $ U$, then we conclude from the equalities $\sigma \cdot U= \sigma U\sigma^{-1}= \sigma 
D \sigma^{-1}  + \sigma N \sigma^{-1}$ that the conjugation action on loop-augmented forests is equivalent to 
the simultaneous conjugation action on labeled rooted forests and the representation of $\mc{S}_n$ on 
diagonal matrices.

The following two theorems from~\cite{CanRemmel} 
describe, respectively, 
the stabilizer of a labeled loop-augmented rooted 
forest and the character of the corresponding odun.

\begin{Theorem}\label{T:Stabilizer}
Let $f$ be a partial transformation of the form $f = \begin{pmatrix} \sigma & 0 \\ 0 & \tau \end{pmatrix}$, where 
$\sigma \in \mc{S}_k$ is a permutation and $\tau\in \mc{C}_{n-k}$ is a nilpotent partial transformation. In this
case, the stabilizer subgroup in $\mc{S}_n$ of $f$ has the following decomposition:
$$
\mt{Stab}_{\mc{S}_n} (f) = Z(\sigma) \times \mt{Stab}_{\mc{S}_{n-k}}(\tau),
$$
where $Z(\sigma)$ is the centralizer of $\sigma$ in $\mc{S}_k$ and $\mt{Stab}_{\mc{S}_{n-k}}(\tau)$ is the stabilizer 
subgroup of $\tau$ in $\mc{S}_{n-k}$.
\end{Theorem}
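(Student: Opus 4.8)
The plan is to show that the partition of $[n]$ into the ``permutation block'' $P=\{1,\dots,k\}$ and the ``nilpotent block'' $T=\{k+1,\dots,n\}$ is recovered canonically from $f$ alone, so that every $g\in\mc{S}_n$ conjugating $f$ to itself must preserve this partition; once that is known, the decomposition of the stabilizer follows by restricting $g$ to the two blocks. Since the stabilizer is unchanged if we replace the partial transformation $f$ by the full transformation $\varphi_0(f)$ on $\overline{[n]}$ (the map $\varphi_0$ being an injective, conjugation-equivariant semigroup homomorphism, with $\mc{S}_n$ fixing $0$), I would work throughout with $\varphi_0(f)$ and iterate it freely.

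First I would establish the intrinsic description of the two blocks. From the block form, $f(P)\subseteq P$ and $f(T)\subseteq T\cup\{0\}$. Since $\sigma$ is an honest permutation of $P$, one has $f^m(i)\in P$, in particular $f^m(i)\neq 0$, for every $i\in P$ and every $m\ge 0$. Since $\tau\in\mc{C}_{n-k}$ is nilpotent, there is an $N$ with $f^N(j)=0$ for every $j\in T$. Hence
\begin{align*}
P=\{\,i\in[n]:\ f^m(i)\neq 0\ \text{for all}\ m\ge 0\,\},\qquad T=[n]\setminus P,
\end{align*}
and these sets depend on $f$ only. Now suppose $g\in\mc{S}_n$ satisfies $gfg^{-1}=f$. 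Extending $g$ by $g(0)=0$, the identity $(gfg^{-1})^m\circ g=g\circ f^m$ together with $gfg^{-1}=f$ gives $f^m(g(i))=g(f^m(i))$ for all $m$. Since $g$ is a bijection fixing $0$, it follows that $f^m(g(i))\neq 0$ for all $m$ if and only if $f^m(i)\neq 0$ for all $m$; that is, $g(i)\in P\iff i\in P$, so $g(P)=P$ and $g(T)=T$.

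Finally I would read off the two factors. Because $g$ preserves $P$ and $T$, it restricts to permutations $g_1\in\mc{S}_k$ of $P$ and $g_2\in\mc{S}_{n-k}$ of $T$, and conjugating the block-diagonal $f$ by the block-diagonal $g$ yields $gfg^{-1}=\begin{pmatrix} g_1\sigma g_1^{-1} & 0 \\ 0 & g_2\tau g_2^{-1}\end{pmatrix}$. Comparing the two blocks, the relation $gfg^{-1}=f$ is equivalent to $g_1\sigma g_1^{-1}=\sigma$ and $g_2\tau g_2^{-1}=\tau$, i.e.\ to $g_1\in Z(\sigma)$ and $g_2\in\mt{Stab}_{\mc{S}_{n-k}}(\tau)$; conversely any such pair $(g_1,g_2)$ manifestly fixes $f$. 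Since, inside $\mc{S}_n$, the subgroups $Z(\sigma)$ and $\mt{Stab}_{\mc{S}_{n-k}}(\tau)$ have disjoint supports $P$ and $T$, they commute elementwise and meet only in the identity, so $\mt{Stab}_{\mc{S}_n}(f)$ is their internal direct product, which is exactly the asserted decomposition. The only real content is the intrinsic characterization of $P$ in the middle step — the separation of the recurrent (permuted) coordinates from the eventually-zero ones — and it rests squarely on the two hypotheses that $\sigma$ is invertible and $\tau$ is nilpotent; everything after that is formal manipulation of block matrices.
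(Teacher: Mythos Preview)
Your argument is correct. The intrinsic characterization of $P$ as the set of $i\in[n]$ with $f^m(i)\neq 0$ for all $m$, and of $T$ as its complement, is exactly the right idea; it uses precisely the hypotheses that $\sigma$ is invertible and $\tau$ is nilpotent, and the rest is routine block-diagonal bookkeeping.

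Regarding comparison with the paper: note that the paper does \emph{not} prove this theorem. It is quoted from~\cite{CanRemmel} as background (``The following two theorems from~\cite{CanRemmel} describe, respectively, the stabilizer of a labeled loop-augmented rooted forest and the character of the corresponding odun''), and no proof is given in the present paper. So there is nothing here to compare your approach against; you have supplied a self-contained proof where the paper simply cites one.
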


\begin{Theorem}\label{T:Master 1}
Let $f$ be a partial function representing a loop-augmented forest on $n$ vertices. 
Then $f$ is similar to a block diagonal matrix of the form $\begin{pmatrix}  \sigma & 0 \\ 0 & \tau 
\end{pmatrix} $ for some $ \sigma \in \mc{S}_k $ and $ \tau \in \mc{C}_{n-k}$. Furthermore, 
if $ \nu$, which is a partition of $k$, is the conjugacy type of $\sigma$ in $\mc{S}_k$ and if the underlying rooted forest of 
the nilpotent partial transformation $\tau$ has $\lambda_1$ copies of the rooted tree $\tau_1$, 
$\lambda_2$ copies of the rooted forest $\tau_2$ and so on, then the character of $ o(f)$ is given by 
\begin{align*}
\chi_{o(f)}  =  \chi^\nu \cdot  \chi_{o(\tau)} = \chi^\nu  \cdot  (\chi^{(\lambda_1)} [ \chi_{o(\tau_1)}]) \cdot (\chi
^{(\lambda_2)} [ \chi_{o(\tau_2)}])\cdot \cdots \cdot (\chi^{(\lambda_r)} [\chi_{o(\tau_r)}]).
\end{align*}
\end{Theorem}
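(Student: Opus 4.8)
\textbf{Proof plan for Theorem~\ref{T:Master 1}.}

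The plan is to combine the structural reduction discussed in the paragraphs preceding the statement with Theorem~\ref{T:Stabilizer} and the standard relationship between induction and stabilizer subgroups. First I would establish the block-diagonal normal form: starting from the partial function $f$ representing a loop-augmented forest, I view $f$ as an $n\times n$ incidence matrix with at most one $1$ in each column, which by a suitable relabeling of the vertices (i.e.\ conjugation by a permutation, which does not change the isomorphism class of the odun) becomes upper triangular, hence $U = D+N$ with $D$ diagonal $0/1$ and $N$ strictly upper triangular nilpotent. The diagonal entries record exactly the loops at the roots; grouping the looped components together and the loop-free part together, a further relabeling brings $U$ to the form $\begin{pmatrix}\sigma & 0\\ 0 & \tau\end{pmatrix}$, where $\sigma$ is (conjugate to) a permutation matrix in $\mc{S}_k$ whose cycle type I call $\nu\vdash k$, and $\tau\in\mc{C}_{n-k}$ is the nilpotent partial transformation of the loop-free subforest. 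Here I would note that the looped components of a loop-augmented forest, being rooted trees with a loop at the root, correspond under extension-by-zero to a permutation matrix precisely on those vertices only after discarding the non-root vertices — so more carefully, one splits off the fixed points (loops) as the $\sigma$-block of size $k$ equal to the number of loops, with $\nu = (1^k)$ in the basic case, but the statement is phrased to allow the more general situation where the ``permutation part'' can have arbitrary type $\nu$, covering the extension of the construction to all of $\mc{P}_n$ mentioned earlier.

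Next I would pass to characters. Since $o(f) \cong \mt{Ind}_{\mt{Stab}_{\mc{S}_n}(f)}^{\mc{S}_n}\mathbf{1}$ (as recalled in the Preliminaries), and by Theorem~\ref{T:Stabilizer} the stabilizer factors as $\mt{Stab}_{\mc{S}_n}(f) = Z(\sigma)\times \mt{Stab}_{\mc{S}_{n-k}}(\tau)$ inside $\mc{S}_k\times\mc{S}_{n-k}\subseteq\mc{S}_n$, I would apply transitivity of induction in two stages: first induce $\mathbf{1}$ from $Z(\sigma)\times\mt{Stab}_{\mc{S}_{n-k}}(\tau)$ up to $\mc{S}_k\times\mc{S}_{n-k}$, which gives the external tensor product $\big(\mt{Ind}_{Z(\sigma)}^{\mc{S}_k}\mathbf{1}\big)\boxtimes\big(\mt{Ind}_{\mt{Stab}_{\mc{S}_{n-k}}(\tau)}^{\mc{S}_{n-k}}\mathbf{1}\big)$, and then induce from $\mc{S}_k\times\mc{S}_{n-k}$ up to $\mc{S}_n$. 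On the Frobenius-character side the last step is precisely multiplication of symmetric functions, so $\chi_{o(f)} = \big(\mt{Ind}_{Z(\sigma)}^{\mc{S}_k}\mathbf{1}\big)\cdot \chi_{o(\tau)}$. The first factor is, by the standard fact that inducing the trivial representation from a centralizer of an element of cycle type $\nu$ yields the permutation character whose Frobenius image is $p_\nu$ rewritten — more to the point, $\mt{Ind}_{Z(\sigma)}^{\mc{S}_k}\mathbf{1}$ has Frobenius character equal to $\sum_{\mu\vdash k}\chi^\mu(\sigma)\,\chi^\mu$; however, to match the claimed formula I would instead invoke the identity $\mt{Ind}_{Z(\sigma)}^{\mc{S}_k}\mathbf{1} = \chi^\nu$ in the sense intended by the authors (i.e.\ this is how $\chi^\nu$ is being notated here as the odun of the ``all-loops'' forest of type $\nu$), giving the leading factor $\chi^\nu$.

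Finally I would handle the nilpotent block. The underlying rooted forest of $\tau$ decomposes into connected components; gathering isomorphic components, say $\lambda_i$ copies of the rooted tree/forest $\tau_i$ for $i=1,\dots,r$, the stabilizer of $\tau$ is the direct product of wreath products $\big(\mt{Stab}(\tau_i)\wr\mc{S}_{\lambda_i}\big)$, and inducing the trivial representation through a wreath product $H\wr\mc{S}_m$ up to $\mc{S}_{m|\tau_i|}$ produces exactly the plethysm $\chi^{(\lambda_i)}[\chi_{o(\tau_i)}] = s_{\lambda_i}[F_{o(\tau_i)}]$ — this is the factorization rule already used in the worked Example above. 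Multiplying these together (again, product of symmetric functions corresponds to induction from a Young subgroup) yields $\chi_{o(\tau)} = \prod_{i=1}^r \chi^{(\lambda_i)}[\chi_{o(\tau_i)}]$, and combining with the permutation block gives the asserted formula. The main obstacle I anticipate is bookkeeping the relabelings so that the block decomposition is canonical and compatible with the subgroup chain $Z(\sigma)\times\mt{Stab}(\tau)\le \mc{S}_k\times\mc{S}_{n-k}\le\mc{S}_n$ — in particular, being careful that the ``$\sigma$-block'' genuinely contributes $\chi^\nu$ and not merely a permutation character, and that the plethysm-through-wreath-product step is applied to the correct components (the isomorphism type of a component as a \emph{rooted} forest, including its root, is what must be matched). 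All the representation-theoretic inputs — transitivity of induction, $\mt{Ind}$ from Young subgroups $\leftrightarrow$ product of symmetric functions, $\mt{Ind}$ through wreath products $\leftrightarrow$ plethysm — are standard, so once the normal form is pinned down the character computation is a direct assembly; I would not grind through the wreath-product plethysm identity itself, citing it and the earlier Example instead.
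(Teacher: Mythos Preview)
The paper does not actually prove Theorem~\ref{T:Master 1}; it is quoted from \cite{CanRemmel} as background, so there is no in-paper proof to compare against. That said, your overall strategy---reduce to a normal form, invoke Theorem~\ref{T:Stabilizer} to factor the stabilizer as $Z(\sigma)\times\mt{Stab}_{\mc{S}_{n-k}}(\tau)$, then induce in stages through $\mc{S}_k\times\mc{S}_{n-k}$ using the dictionary ``induction from a Young subgroup $\leftrightarrow$ product'' and ``induction through a wreath product $\leftrightarrow$ plethysm''---is the standard and correct skeleton, and it is exactly the kind of argument one expects the cited reference to contain.

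Two points in your write-up deserve more care. First, the block-diagonal normal form is more delicate than you indicate: a looped root with children (which the definition and Figure~\ref{F:pruned} explicitly allow) does \emph{not} become permutation-conjugate to a genuine block-diagonal $\sigma\oplus\tau$, since the edges from the children into the looped root cross the blocks. The word ``similar'' in the statement has to be read more loosely (e.g.\ $\mt{GL}_n$-similar, or simply as asserting equality of oduns), and the actual content is the character formula, not the matrix normal form. You half-notice this when you hedge about ``discarding the non-root vertices,'' but you should flag it squarely rather than paper over it. Second, you are right to be uneasy about the factor $\chi^\nu$: the Frobenius image of $\mt{Ind}_{Z(\sigma)}^{\mc{S}_k}\mathbf{1}$ is \emph{not} the Schur function $s_\nu$ in general (for $\nu=(1^k)$ it is $s_k$, not $s_{1^k}$), so the symbol $\chi^\nu$ in the statement should be read as the permutation character on the conjugacy class of type $\nu$ (Frobenius image $\prod_i h_{m_i}[p_i]$ in multiplicity notation), not as the irreducible character. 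In the only application the present paper makes (the case $k=1$ for $E_{i,i}$) this distinction is invisible, but for a full proof you would need to state precisely which class function $\chi^\nu$ denotes and verify that it matches $\mt{Ind}_{Z(\sigma)}^{\mc{S}_k}\mathbf{1}$.
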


\section{Proof of Theorem~\ref{T:main1}}

Let $i$ and $j$ be two integers from $[n]$. 
We denote by $E_{i,j}$ the $n\times n$ 0/1 matrix with
1 at its $(i,j)$-th entry and 0's elsewhere. 
As a vector space, $\mt{Mat}_n(\C)$ is spanned by $E_{i,j}$'s.
In fact, $\{E_{i,j} :\ i,j\in [n] \}$ constitute a basis,
\begin{align}\label{A:direct sum}
\mt{Mat}_n(\C) = \bigoplus_{i,j\in [n]} \C E_{i,j}. 
\end{align}
It is clear that $E_{i,j}$'s are actually partial transformation matrices.
Equivalently, we will view $E_{i,j}$'s as labeled loop-augmented rooted forests,
essentially in two types. 
The adjoint representation
of $S_n$ on $\text{Mat}_n(\C)$ is completely determined
by the action of $S_n$ on these two types of 
labeled loop-augmented rooted forests;

\begin{enumerate}
\item $i,j \in [n]$ and $i \neq j$. 
In this case, the labeled loop-augmented rooted forest 
corresponding to $E_{i,j}$ is as in Figure~\ref{F:C1}.
\begin{figure}[h]
\centering
\begin{tikzpicture}
[scale=.75,cap=round,>=latex,distance=2cm, thick]
\node[inner sep =0.5, circle,draw,ultra thick] (1) at (-3,0) {1};
\node[inner sep =0.5, circle,draw,ultra thick] (2) at (-2,0) {2};
\node (3) at (-1,0) {$\dots$};
\node[inner sep =0.65, circle,draw,ultra thick] (4) at (0,0) {$i$};
\node (5) at (1,0) {$\dots$};
\node[inner sep =0.5, circle,draw,ultra thick] (6) at (2,0) {$j$};
\node (7) at (3,0) {$\dots$};
\node[inner sep =0.5, circle,draw,ultra thick] (8) at (4,0) {$n$};
\draw[ultra thick,->] (4) to [out = 45, in =135,distance = 1cm, looseness=.5] (6);
\end{tikzpicture}
\caption{The basis element $E_{i,j}$ as a labeled loop-augmented rooted forest.}
\label{F:C1}
\end{figure}
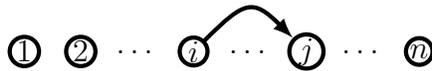

By~\cite[Corollary 6.2]{Can17}, the Frobenius character of the $\mc{S}_n$-module 
structure on the orbit $\mc{S}_n \cdot E_{i,j}$ is given by
\begin{align*}
F_{o (E_{i,j})} = s_1\cdot s_1 \cdot s_{n-2}[s_1] = s_1^2 s_{n-2}.
\end{align*}
By applying the Pierri rule (twice), we find that 
\begin{align}\label{A:type 1}
F_{o (E_{i,j})} = 
\begin{cases}
s_2+s_{1,1} & \text{ if $n=2$}; \\
s_3+2s_{2,1}+s_{1,1,1} & \text{ if $n=3$}; \\
s_n + 2s_{n-1,1} + s_{n-2,2} + s_{n-2,1,1}& \text{ if $n\geq 4$}.
\end{cases}
\end{align}

\item $i,j \in [n]$ and $i= j$. 
In this case, the labeled loop-augmented rooted forest 
corresponding to $E_{i,i}$ is as in Figure~\ref{F:C2}.
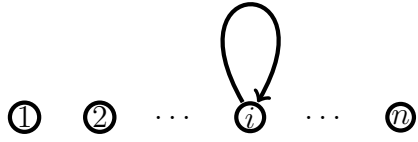
\begin{figure}[h]
\centering
\begin{tikzpicture}
\node[inner sep =0.5, circle,draw,ultra thick] (1) at (-3,0) {1};
\node[inner sep =0.5, circle,draw,ultra thick] (2) at (-2,0) {2};
\node (3) at (-1,0) {$\dots$};
\node[inner sep =0.65, circle,draw, ultra thick] (4) at (0,0) {$i$};
\node (5) at (1,0) {$\dots$};
\node[inner sep =0.5, circle,draw,ultra thick] (8) at (2,0) {$n$};
\draw[ultra thick, ->] (-.1,0.2) to [controls=+(-60:-2) and +(60:2)] (0.1,0.2);
\end{tikzpicture}
\caption{The basis element $E_{i,i}$ as a labeled loop-augmented rooted forest.}
\label{F:C2}
\end{figure}

By Theorem~\ref{T:Master 1}, the Frobenius character of the $\mc{S}_n$-module 
structure on the orbit $\mc{S}_n \cdot E_{i,i}$ is given by
\begin{align*}
F_{o (E_{i,i})} = s_1 \cdot s_{n-1}[s_1] = s_1 s_{n-1}.
\end{align*}
By Pierri rule, we see that 
\begin{align}\label{A:type 2}
F_{o (E_{i,j})} = s_n + s_{n-1,1}. 
\end{align}

\end{enumerate}

Note that $E_{k,l}\in \mc{S}_n\cdot E_{i,j}$ for all
$k,l\in [n]$ with $k\neq l$. Indeed, $\mc{S}_n$ 
acts on $E_{i,j}$ by permuting the labels on the 
vertices. Note also that $E_{k,k}\in \mc{S}_n\cdot E_{i,i}$ 
for all $k\in [n]$. Therefore, in the light of direct sum
(\ref{A:direct sum}), by combining (\ref{A:type 1}) and (\ref{A:type 2}), 
we see that the Frobenius character of the adjoint representation
of $\mc{S}_n$ on $\mt{Mat}_n(\C)$ is as we claimed in Theorem~\ref{T:main1}.
This finishes the proof of our first main result.

\section{Proofs of Theorem~\ref{T:main2} and Corollary~\ref{T:main3}}

First, we have some remarks about the 
vector space basis for $\mt{Sym}_n(\C)$. 
For $i,j\in [n]$ with $i\neq j$, we set 
$$
F_{i,j} := E_{i,j} + E_{j,i}.
$$
A vector space basis for $\mt{Sym}_n(\C)$ is 
given by the union  
\begin{align}\label{A:partial involution}
\{ E_{i,i}:\ i=1,\dots,n \} \cup \{ F_{i,j} :\ i,j\in [n],\ i\neq j \}.
\end{align}
Notice that the matrices $F_{i,j}$'s are partial transformation matrices as well. 
However, this time, the directed graph corresponding to $F_{i,j}$ is not a forest.
See Figure~\ref{F:I1}.
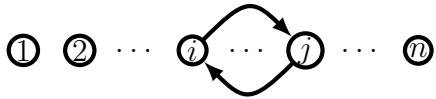
\begin{figure}[h]
\centering
\begin{tikzpicture}
[scale=.75,cap=round,>=latex,distance=2cm, thick]
\node[inner sep =0.5, circle,draw,ultra thick] (1) at (-3,0) {1};
\node[inner sep =0.5, circle,draw,ultra thick] (2) at (-2,0) {2};
\node (3) at (-1,0) {$\dots$};
\node[inner sep =0.65, circle,draw,ultra thick] (4) at (0,0) {$i$};
\node (5) at (1,0) {$\dots$};
\node[inner sep =0.5, circle,draw,ultra thick] (6) at (2,0) {$j$};
\node (7) at (3,0) {$\dots$};
\node[inner sep =0.5, circle,draw,ultra thick] (8) at (4,0) {$n$};
\draw[ultra thick,->] (4) to [out = 45, in =135,distance = 1cm, looseness=.5] (6);
\draw[ultra thick,->] (6) to [out = 225, in =315,distance = 1cm, looseness=.5] (4);
\end{tikzpicture}
\caption{The basis element $F_{i,j}$ as a directed graph.}
\label{F:I1}
\end{figure}

\begin{Lemma}\label{L:perm on Fij}
Let $i$ and $j$ be two elements from $[n]$ such that $i\neq j$. 
The orbit of the adjoint action of $\mc{S}_n$ on the matrix 
$F_{i,j}$ is the same as the permutation action of 
$\mc{S}_n$ on the set of all labelings of the vertices of the 
directed graph in Figure~\ref{F:directed}.
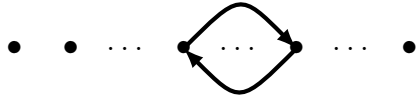
\begin{figure}[h]
\centering
\begin{tikzpicture}
[scale=.75,cap=round,>=latex,distance=2cm, thick]
\draw[ultra thick, ->] (0,0) .. controls (1,1)  .. (2,0);
\draw[ultra thick, ->] (2,0) .. controls (1,-1)  .. (0,0);
\node (1) at (-3,0) {$\bullet$};
\node (2) at (-2,0) {$\bullet$};
\node (3) at (-1,0) {$\dots$};
\node (4) at (0,0) {$\bullet$};
\node (5) at (1,0) {$\dots$};
\node (6) at (2,0) {$\bullet$};
\node (7) at (3,0) {$\dots$};
\node (8) at (4,0) {$\bullet$};
\end{tikzpicture}
\caption{The arrows are between the $i$-th and the $j$-th vertices.}
\label{F:directed}
\end{figure}
In particular, $F_{1,2} \in \mc{S}_n\cdot F_{i,j}$.
\end{Lemma}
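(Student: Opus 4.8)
The plan is to compute directly how a permutation matrix conjugates the rank-two matrix $F_{i,j}$, and then to translate the resulting formula into the combinatorial language of labeled directed graphs. First I would recall the elementary identity governing the adjoint action on matrix units: if $\sigma\in\mc{S}_n$ is identified with its permutation matrix, then $\sigma\cdot E_{k,l} = \sigma E_{k,l}\sigma^{-1} = E_{\sigma(k),\sigma(l)}$ for all $k,l\in[n]$. This is immediate from $\sigma e_l = e_{\sigma(l)}$ on the standard basis, or from the observation already made in the Preliminaries that conjugation by a permutation matrix permutes the rows and columns of the incidence matrix simultaneously. Applying this to $F_{i,j}=E_{i,j}+E_{j,i}$ and using linearity of the adjoint action gives
\[
\sigma\cdot F_{i,j} \;=\; E_{\sigma(i),\sigma(j)} + E_{\sigma(j),\sigma(i)} \;=\; F_{\sigma(i),\sigma(j)}.
\]

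Next I would record two consequences. Since $\sigma(i)\neq\sigma(j)$ whenever $i\neq j$, the orbit $\mc{S}_n\cdot F_{i,j}$ is contained in, and (because $\mc{S}_n$ acts transitively on the ordered pairs of distinct elements of $[n]$) equal to, the set $\{F_{k,l}:\ k,l\in[n],\ k\neq l\}$. On the combinatorial side, the directed graph attached to $F_{k,l}$ as in Figure~\ref{F:I1} consists of $n$ labeled vertices together with a pair of opposite arrows joining the vertices labeled $k$ and $l$, which is exactly a labeling of the vertices of the fixed directed graph of Figure~\ref{F:directed}. Under this correspondence the displayed formula says precisely that the adjoint action of $\sigma$ sends the labeling attached to $F_{i,j}$ to the labeling obtained by relabeling each vertex $v\mapsto\sigma(v)$, that is, to the permutation action on labelings. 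The only point requiring a word of care is that the two arrow-joined vertices of Figure~\ref{F:directed} are interchangeable: swapping them produces the same labeled directed graph, and correspondingly $F_{k,l}=F_{l,k}$ as matrices, so the identification of $\mc{S}_n$-sets is consistent and the stabilizer of the labeling agrees with $\mt{Stab}_{\mc{S}_n}(F_{i,j})$.

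Finally, since $\{1,2\}$ is a two-element subset of $[n]$ and $\mc{S}_n$ acts transitively on two-element subsets, there is $\sigma\in\mc{S}_n$ with $\{\sigma(i),\sigma(j)\}=\{1,2\}$; by the displayed formula $\sigma\cdot F_{i,j}=F_{1,2}$ (using $F_{2,1}=F_{1,2}$ if necessary), so $F_{1,2}\in\mc{S}_n\cdot F_{i,j}$. I do not anticipate a genuine obstacle: the content is the one-line conjugation computation, and the rest is unwinding the dictionary between partial-transformation matrices and labeled directed graphs set up in the Preliminaries; the only thing to be vigilant about is the symmetry $F_{i,j}=F_{j,i}$ and its graphical counterpart.
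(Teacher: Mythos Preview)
Your proof is correct and follows essentially the same approach as the paper: both rest on the conjugation identity $\sigma\cdot F_{i,j}=F_{\sigma(i),\sigma(j)}$ and the resulting identification with relabelings of the directed graph. The only cosmetic difference is that the paper reduces the first claim to the second and then exhibits explicit transpositions (applying $(1,j)$ and then $(2,j)$) to move $F_{i,j}$ to $F_{1,2}$, whereas you invoke transitivity of $\mc{S}_n$ on two-element subsets; your treatment is in fact a bit more complete, since it spells out the general formula and handles the symmetry $F_{k,l}=F_{l,k}$ explicitly.
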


\begin{proof}
Since $i$ and $j$ are two different but otherwise arbitrary elements from $[n]$, 
it suffices to prove our second claim only. 
Also, without loss of generality we will assume that $i < j$. 
Now, we apply the adjoint action of the transposition $(1,j)$ to $F_{i,j}$;
$$
\mt{Ad}_{(1,j)} (F_{i,j}) = F_{1,j}.
$$
Next, we apply the adjoint action of the transposition $(2,j)$ to $F_{1,j}$;
$$
\mt{Ad}_{(2,j)} (F_{1,j}) = F_{1,2}.
$$
Therefore, $\mt{Ad}_{(2,j)(1,j)} (F_{i,j})= F_{1,2}$. This finishes the proof.
\end{proof}

In the notation of~\cite{CanCherniavskyTwelbeck},
any element from (\ref{A:partial involution}) is a partial involution.
Following our arguments from~\cite{CanRemmel} for
the idempotents of $\mc{P}_n$, next, we will compute the stabilizer 
subgroup of the partial involution $F_{i,j}$. Let us mention in passing
that for all $i$ in $[n]$, the matrix $E_{i,i}$ is already an idempotent, 
hence, we know its stabilizer subgroup.

\begin{Lemma}\label{L:stabilizer of Fij}
The odun of $F_{i,j}$, $o(F_{i,j})$ is equal to the $\mc{S}_n$-module 
$$
o(F_{i,j})=\oplus_{i,j\in [n],\ i\neq j} \C F_{i,j}.
$$
The stabilizer subgroup of $F_{i,j}$ in $\mc{S}_n$ is isomorphic 
to the parabolic subgroup $\mc{S}_2\times \mc{S}_{n-2}$. 
\end{Lemma}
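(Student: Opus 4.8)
The plan is to prove the two assertions of Lemma~\ref{L:stabilizer of Fij} in order, leaning on Lemma~\ref{L:perm on Fij} for the first and on an explicit computation of the pointwise stabilizer for the second.

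For the first assertion, I would argue that the orbit $\mc{S}_n\cdot F_{i,j}$ consists of all the matrices $F_{k,l}$ with $k\neq l$, and that these are linearly independent. Indeed, by Lemma~\ref{L:perm on Fij}, applying $\mt{Ad}_\sigma$ to $F_{i,j}$ permutes the labels of the two vertices carrying the bidirected arrows, so $\mt{Ad}_\sigma(F_{i,j}) = F_{\sigma(i),\sigma(j)}$; as $\sigma$ ranges over $\mc{S}_n$ the pair $\{\sigma(i),\sigma(j)\}$ ranges over all two-element subsets of $[n]$, giving every $F_{k,l}$. Since the $F_{k,l}$ with $k\neq l$ (equivalently, the unordered-pair-indexed family $\{F_{k,l}\}$) are part of the basis in~(\ref{A:partial involution}) of $\mt{Sym}_n(\C)$, they are linearly independent, so the span of the orbit is exactly $\bigoplus_{i,j\in[n],\,i\neq j}\C F_{i,j}$. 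This realizes $o(F_{i,j})$ as claimed. (One should note $F_{k,l}=F_{l,k}$, so the index set in the stated direct sum is really the set of $2$-subsets; this is a harmless abuse of notation already present in the paper.)

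For the second assertion, the cleanest route is a direct description of $\mt{Stab}_{\mc{S}_n}(F_{i,j})$ as a set of permutations. Since all $F_{i,j}$ are conjugate, it suffices to compute $\mt{Stab}_{\mc{S}_n}(F_{1,2})$. A permutation $\sigma$ fixes $F_{1,2}$ iff $\sigma F_{1,2}\sigma^{-1}=F_{1,2}$, i.e. iff $\{\sigma(1),\sigma(2)\}=\{1,2\}$ and $\sigma$ fixes the remaining vertices $\{3,\dots,n\}$ setwise (which is automatic once $\{\sigma(1),\sigma(2)\}=\{1,2\}$). Thus $\sigma$ either fixes both $1$ and $2$ or swaps them, and acts arbitrarily on $\{3,\dots,n\}$; that is, $\mt{Stab}_{\mc{S}_n}(F_{1,2})=\mc{S}_{\{1,2\}}\times\mc{S}_{\{3,\dots,n\}}\cong \mc{S}_2\times\mc{S}_{n-2}$, the stated parabolic subgroup. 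Alternatively, one can invoke Theorem~\ref{T:Stabilizer}: the matrix $F_{1,2}$, after the relabeling that moves vertices $1,2$ to the front, is block-diagonal of the form $\begin{pmatrix}\sigma_0 & 0\\ 0 & 0_{n-2}\end{pmatrix}$ where $\sigma_0$ is the transposition in $\mc{S}_2$ and the lower block is the zero (hence nilpotent) partial transformation on $n-2$ points with empty domain; then $\mt{Stab}(F_{1,2})=Z_{\mc{S}_2}(\sigma_0)\times\mt{Stab}_{\mc{S}_{n-2}}(0)=\mc{S}_2\times\mc{S}_{n-2}$, since the centralizer of the nontrivial element of $\mc{S}_2$ is all of $\mc{S}_2$ and the stabilizer of the everywhere-undefined partial transformation is all of $\mc{S}_{n-2}$.

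I expect the only genuine subtlety — and it is minor — to be bookkeeping about $F_{k,l}=F_{l,k}$ and making sure the "directed graph" picture in Figure~\ref{F:I1}/\ref{F:directed} is read correctly: a permutation preserves the bidirected pair of arrows precisely when it preserves the unordered pair $\{i,j\}$, not the ordered pair, which is exactly why the first factor of the stabilizer is $\mc{S}_2$ rather than trivial. Everything else is a short unwinding of definitions, so I would keep the write-up to the two displayed stabilizer computations and a one-line remark on linear independence.
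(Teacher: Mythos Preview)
Your proposal is correct and follows essentially the same route as the paper: invoke Lemma~\ref{L:perm on Fij} to identify the orbit with all $F_{k,l}$ (hence the odun with their span), then reduce to $F_{1,2}$ and read off the stabilizer as $\mc{S}_2\times\mc{S}_{n-2}$ from the block structure. The paper does the stabilizer step via the block-matrix form~(\ref{A:F12}) rather than your label-permutation phrasing, and it does not spell out the alternative via Theorem~\ref{T:Stabilizer}, but these are the same argument.
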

\begin{proof}
As we already mentioned before, the adjoint (conjugation) action
of $\mc{S}_n$ on partial transformation matrices amounts to 
the permutation action of $\mc{S}_n$ on the labels of the associated 
graph (Lemma~\ref{L:perm on Fij}). Our first claim readily follows from
this argument. 
Now, without loss of generality, we assume that $F_{i,j} = F_{1,2}$. 
In other words, 
\begin{align}\label{A:F12}
F_{i,j} = F_{1,2} = 
\begin{bmatrix}
0 & 1 & 0 \\
1 & 0 & 0 \\
0 & 0 & \mathbf{0}_{n-2}
\end{bmatrix}.
\end{align}
Clearly, the stabilizer subgroup in $\mc{S}_n$ of 
the matrix (\ref{A:F12}) consists of matrices of the form 
$\begin{bmatrix} \sigma_1 & 0 \\ 0 & \sigma_2 \end{bmatrix}$,
where $\sigma_1\in \mc{S}_2$ and $\sigma_2 \in \mc{S}_{n-2}$.
\end{proof}

\begin{proof}[Proof of Theorem~\ref{T:main2}]
It follows from Lemma~\ref{L:stabilizer of Fij} that 
the representation of $\mc{S}_n$ on the orbit $o(F_{i,j}) \cong \mc{S}_n\cdot F_{1,2}$
is isomorphic to the left multiplication action of $\mc{S}_n$ on the 
right coset space $\C [ \mc{S}_n / \mc{S}_2\times \mc{S}_{n-2}]$.
It follows from definitions that this representation is isomorphic to 
\begin{align}\label{A:representation}
\C [ \mc{S}_n / \mc{S}_2\times \mc{S}_{n-2}] \cong \text{Ind}_{\mc{S}_2\times \mc{S}_{n-2}}^{\mc{S}_n} \textbf{1}.
\end{align}
In particular, the Frobenius character of (\ref{A:representation})
is given by $F_{o(F_{i,j})} = s_2 s_{n-2} $. By the Pierri rule, we have 
\begin{align}\label{A:FoF}
F_{o(F_{i,j})} &=
\begin{cases}
s_2 & \text{ if $n=2$;}\\ 
s_3+ s_{2,1} & \text{ if $n=3$;}\\
s_n+ s_{n-1,1} + s_{n-2,2} & \text{ if $n\geq 4$.}  
\end{cases}
\end{align}
The rest of the proof follows from combining (\ref{A:FoF}) 
with the formula (\ref{A:type 2}).

\end{proof}

\begin{proof}[Proof of Corollary~\ref{T:main3}]
The Frobenius character of $\mt{Mat}_n(\C)$ is the sum
of the Frobenius characters of $\mt{Sym}_n(\C)$ and 
$\mt{Skew}_n(\C)$. The proof now is a consequence of Theorems~\ref{T:main1} and~\ref{T:main2}.
\end{proof}

\bibliography{References.bib}
\bibliographystyle{plain}
\end{document}